\theoremstyle{definition}
\newtheorem{theorem}{Theorem}
\newtheorem{lemma}{Lemma}[section]
\newtheorem{proposition}[lemma]{Proposition}
\newtheorem{example}[lemma]{Example}
\newcommand{\periodafter}[1]{#1.}
\titleformat{\subsection}[runin]
{\normalfont\bfseries}{\thesubsection}{0.5em}{\periodafter}
\numberwithin{equation}{section}
\renewcommand{\thelemma}{\arabic{section}.\arabic{lemma}}
\renewcommand{\thesection}{\arabic{section}.}
\renewcommand{\thesubsection}{\arabic{section}.\arabic{subsection}.}
\renewcommand{\theequation}{\arabic{section}.\arabic{equation}}
\newcounter{RomanNumber}
\newcommand{\MyRoman}[1]{\setcounter{RomanNumber}{#1}\Roman{RomanNumber}}
\title{\textsf{Restricted Kac modules of   Hamiltonian Lie superalgebras of odd type}}
\author{\textsc{Jixia Yuan$^{1,}$}\footnote{Supported by the NSF of  HLJ Provincial Education Department (12521158)}\;\;
\textsc{Wende Liu$^{2,}$}\footnote{Corresponding author. Email:
\texttt{wendeliu@ustc.edu.cn}.\;\;Supported by the NNSF
  of China (11171055)}\;  \\
  \textit{$^{1}$School of Mathematical Sciences},
  \textit{Heilongjiang University} \\
  \textit{Harbin 150080, China}\\
    \ \ \textit{$^{3}$School of Mathematical Sciences},
  \textit{Harbin Normal University} \\
  \textit{Harbin 150025, China}
  }
\date{ }
\begin{document}
\maketitle
\begin{quotation}
\noindent\textbf{Abstract} This paper aims  to describe  the restricted Kac modules of   restricted  Hamiltonian
Lie superalgebras of odd type over an algebraically closed field of characteristic $p>3$. In particular, a sufficient and necessary condition for the restricted Kac modules to be irreducible is given in terms of typical weights.
\\

 \noindent \textbf{Keywords}:  Lie superalgebras,    Hamiltonian
Lie superalgebras of odd type, restricted Kac modules

\noindent \textbf{MSC 2000}: 17B10, 17B50,
17B66

  \end{quotation}

  \setcounter{section}{-1}
\section{Introduction}

\noindent
Over an algebraically closed field  of characteristic zero, the finite-dimensional or infinite-dimensional irreducible modules were  studied  for
  the finite-dimensional simple Cartan Lie superalgebras $W(n), S(n)$ and $ H(n)$ (cf. \cite{S} and references therein).
 Over a field of characteristic $p>0$, the analogs of these simple Cartan Lie superalgebras of characteristic zero are simple restricted Lie supealgebras with respect to the usual $p$-mapping ($p$-th associative power). Actually, in the characteristic $p>0$ case, there are more finite-dimensional  simple restricted Lie superalgebras, which are analogous to the finite-dimensional simple modular Lie algebras of Cartan type or infinite-dimensional simple Lie superalgebras of vector  fields over $\mathbb{C}$.  Among them are the first four series of
  finite-dimensional  simple graded restricted Lie superalgebras of Cartan type $W, S, H$ or $K$ (cf. \cite{Z}),   which are analogous to the finite-dimensional simple restricted Lie algebras of the  Cartan type $W, S, H$ or $K$ (cf. \cite{SF}), respectively. Representations of these four series of restricted Lie superalgebras were studied by Shu, Zhang and Yao (cf. \cite{Y1,Y2,YS,SZ,SZ1}). Additionally, there are four series of finite-dimensional graded simple restricted  Lie superalgebras of type $HO, KO, SHO$ or $SKO$ (cf. \cite{BL}), which are analogous to the infinite-dimensional simple  Lie superalgebras of vector fields over $\mathbb{C}$ (cf. \cite{k2}).  As far as we know, irreducible modules of the last four series of modular simple Lie superalgebras have not
been studied yet.
The aim of this paper is to make an attempt to describe the restricted simple modules  for the so-called  Hamiltonian Lie superalgebras of odd type over a field of prime characteristic.

\section{Preliminaries}

The ground field $\mathbb{F}$ is assumed to be algebraically closed  of characteristic $p>3$. Write
$\mathbb{Z}_2= \{\bar{0},\bar{1}\}$ for the additive group of two
elements, $\mathbb{Z}$ and $\mathbb{N}$   the sets of
integers and nonnegative integers, respectively.  For a vector superspace $V=V_{\bar{0}}\oplus
V_{\bar{1}}, $ we write $|x|=\theta$ for the
 {parity} of a $\mathbb{Z}_{2}$-homogeneous element $x\in V_{\theta}, $
$\theta\in \mathbb{Z}_{2}.$ The notation $|x|$ appearing in the text implies that $x$ is  a $\mathbb{Z}_2$-homogeneous element.

\subsection{Divided power superalgebras}
Fix  a pair of positive integers $m, n$  and write $\underline{r}=(r_1,\ldots,r_m\mid r_{m+1},\ldots,r_{m+n})$ for a $(m+n)$-tuple of non-negative integers. For an $m$-tuple $\underline{N}=(N_1,\ldots,N_{m})$, following \cite{Leb,Lei}, we write
 $\mathcal{O}(m, \underline{N}\mid n)$ for  the divided power superalgebra, which is a  supercommutative associative superalgebra  having a (homogeneous) basis:
 \begin{eqnarray*}
\{x^{(\underline{r})}\mid r_i<p^{N_i}\;\mbox{for}\; i\leq m \;\mbox{and}\; r_i=0\;\mbox{or}\;1 \;\mbox{for}\; i> m \}
 \end{eqnarray*}
 with parities
 $
 |x^{(\underline{r})}|=\left(\sum_{i>m}r_{i}\right)\bar{1}
$
 and multiplication relations:
 \begin{eqnarray*}
 x^{(\underline{r})} x^{(\underline{s})}=\Pi_{i=m+1}^{m+n}\min(1, 2-r_i-s_i)(-1)^{\Sigma_{m<i,j\leq m+n}r_js_i}\left(
\begin{array}{c}
\underline{r}+\underline{s} \\
 \underline{r}
\end{array}\right)x^{(\underline{r}+\underline{s})}.
 \end{eqnarray*}
As superalgebras,  $\mathcal{O}(m, \underline{N}\mid n)$ is isomorphic to the tensor product superalgebra of the divided power algebra $\mathcal{O}(m, \underline{N})$ with the trivial $\mathbb{Z}_2$-grading  and the exterior (super)algebra $\Lambda(n)$ with the usual $\mathbb{Z}_2$-grading:
$$
\mathcal{O}(m, \underline{N}\mid n)=\mathcal{O}(m, \underline{N})\otimes_{\mathbb{F}}\Lambda(n).
$$
\subsection{General vectorial Lie superalgebras}
Let $\epsilon_i$ be the $(m+n)$-tuple with 1 at the $i$-th slot and 0  elsewhere. For simplicity we write $x_i$ for $x^{(\epsilon_i)}$. Define the distinguished partial derivatives $\partial_i$ with parity $|\partial_i|=|x_i|$ by letting
$$
\partial_i\left(x^{(k\epsilon_j)}\right)=\delta_{ij}x^{((k-1)\epsilon_j)}\;\mbox{for  $k<p^{N_{j}}$}.
$$

The Lie superalgebra of $\mathcal{O}(m, \underline{N}\mid n)$ of all superderivations contains an important subalgebra, called the general vectorial Lie superalgebra of distinguished superderivations (a.k.a. the Lie superlagebra of Witt type), denoted by $\mathfrak{vect}(m, \underline{N}\mid n)$ (a.k.a.
$W(m, \underline{N}\mid n)$), having an $\mathbb{F}$-basis (see \cite{Leb,Lei} for more details)
$$
\left\{x^{(\underline{r})}\partial_k\mid r_i<p^{N_i}\;\mbox{for $i\leq m$; $1\leq k\leq m+n$}\right\}.
$$
Generally speaking, $\mathfrak{vect}(m, \underline{N}\mid n)$ contains various finite-dimensional simple Lie superalgebras, which are analogous to  finite-dimensional simple modular Lie algebras or infinite-dimensional simple Lie superalgebras of vector fields over $\mathbb{C}$, as mentioned in the introduction.

\subsection{Hamiltonian superalgebras of odd type}
From now on, suppose $m=n$. As in \cite{Leb,Lei}, write
$$
\mathrm{De}_{f}=\sum_{i=1}^{2n}(-1)^{|\partial_{i}||f|}\partial_{i}(f)\partial_{i^{'}},
$$
where
\[ i'=\left\{
 \begin{array}{ll}
i+n,&\mbox{if  $i\leq n,$ }\\
i-n,&\mbox{if $i>n.$}
\end {array}
\right.
\]
Note that
$$
|\mathrm{De}_{f}|=|f|+1
$$
and
\begin{equation*}\label{hee1.1}
   [\mathrm{De}_{f},\mathrm{De}_{g}]=\mathrm{De}_{\{f,g\}_{B}}\;\mbox{for}\;
   f,g\in\mathcal{O}(n,\underline{N}\mid n),
\end{equation*}
where $\{\cdot,\cdot\}_{B}$ is the Buttion bracket given by
$$
\{f,g\}_{B}=\mathrm{De}_{f}(g)=\sum_{i=1}^{2n}(-1)^{|\partial_{i}||f|}\partial_{i}(f)\partial_{i^{'}}(g).
$$
Then
\[\frak{le}\left(n, \underline{N}\mid n\right)=\{\mathrm{De}_{f}\mid f\in\mathcal{O}(n, \underline{N}\mid n)\}\]
is a finite-dimensional simple  Lie superalgebra. We call it the
Hamiltonian superalgebra of odd type. This Lie superalgebra  is called  the odd Hamiltonian superalgebra and denoted by $HO(n,n; \underline{N})$  in \cite{LZ}, which is analogous to the infinite-dimensional Lie superalgebra $HO(n,n)$ of vector fields  over $\mathbb{C}$ in   \cite{k2}. In the present paper, we adopt the notation in \cite{Leb,Lei}.

\subsection{Restricted Lie superalgebras and restricted modules}
Recall that a Lie superalgebra $\mathfrak{g}=\mathfrak{g}_{\bar{0}}\oplus \mathfrak{g}_{\bar{1}}$   is
 restricted if $\mathfrak{g}_{\bar{0}}$ as Lie algebra is  restricted   and
 $\mathfrak{g}_{\bar{1}}$ as $\mathfrak{g}_{\bar{0}}$-module is  restricted.
 For a restricted Lie superalgebra $\mathfrak{g}$,  the $p$-mapping of $\mathfrak{g}_{\bar{0}}$ is also called
the $p$-mapping of the Lie superalgebra $\mathfrak{g}$.

Let $(\mathfrak{g}, [p])$ be a restricted Lie superalgebra. A $\mathfrak{g}$-module $M$ is called restricted if
$$x^{p}\cdot m=x^{[p]}\cdot m \; \mbox{for all \;$x\in \mathfrak{g}_{\bar{0}}, m\in M$}.$$

  By definition, the restricted
enveloping algebra of $\mathfrak{g}$ is  $\mathbf{u}(\mathfrak{g})=\mathrm{U}(\mathfrak{g})/I$, where $I$ is the $\mathbb{Z}_{2}$-graded two-sided
ideal of enveloping algebra $\mathrm{U}(\mathfrak{g})$ generated by elements $\left\{x^{p}-x^{[p]} \mid x\in \mathfrak{g}_{\bar{0}}\right\}.$
Note that  $\mathbf{u}(\mathfrak{g})$ has  a natural structure of a $\mathbb{Z}$-graded superalgebra. Suppose  $(e_1,\ldots, e_m\mid f_1,\ldots, f_n)$ is a $\mathbb{Z}_{2}$-homogeneous basis of $\frak{g}$.
  Then $\mathfrak{u}(\mathfrak{g})$ has the following $\mathbb{F}$-basis:
\begin{eqnarray*}
\left\{f_{1}^{b_{1}}\cdots f_{n}^{b_{n}}e_{1}^{a_{1}}\cdots e_{m}^{a_{m}}\mid 0\leq a_{i}\leq p-1, b_{j}=0\; or \;1\right\}.
\end{eqnarray*}
Note that the $\mathbf{u}(\frak{g})$-modules are precisely the restricted   $\mathfrak{g}$-modules.

A standard fact is that
$\frak{le}(n, \underline{N}\mid n)$ is  a restricted Lie superalgebra if and  only if
$$\underline{N}=\underline{1}:=(1,\ldots,1).$$ Note that the unique  $p$-mapping of $\frak{le}(n, \underline{1}\mid n)$ is the usual (associative) $p$-power.

This paper aims to study the finite-dimensional  irreducible restricted modules of $\frak{le}(n, \underline{1}\mid n)$. From now on we abbreviate $\mathcal{O}(n, \underline{1}\mid n)$ to $\mathcal{O}(n)$, and $\frak{le}(n, \underline{1}\mid n)$ to $\frak{le}(n)$.

\section{Kac  modules and  root reflections}
For a $\mathbb{Z}$-graded  vector space   $V=\oplus_{i\in \mathbb{Z}}V_{[i]}$, we write $\mathrm{deg}v=i$ for the
$\mathbb{Z}$-degree of a $\mathbb{Z}$-homogeneous element $v\in V_{[i]}.$
Put
$$\overline{\frak{le}}(n)=\frak{le}(n)\oplus \mathbb{F}\sum_{ i=1}^{2n}x_{i}\partial_{i}.$$
By letting $\mathrm{deg}x_{i}=1=-\mathrm{deg}\partial_{i},$    $\mathfrak{le}(n)$ and $\overline{\frak{le}}(n)$   become     $\mathbb{Z}$-graded  superalgebras.

\subsection{Triangular decompositions}
Let $\bar{\mathfrak{h}}=\mathfrak{h}\oplus \mathbb{F}\sum_{ i=1}^{2n}x_{i}\partial_{i},$ where
 $$\mathfrak{h}=\mathrm{Span}_{\mathbb{F}}\{\mathrm{De}_{x_{i}x_{i'}}\mid  i \leq n\}.$$ Then $\bar{\mathfrak{h}}$ is a Cartan subalgebra of  $\overline{\frak{le}}(n)_{[0]}$ and
$$\overline{\frak{le}}(n)=\oplus_{\alpha\in \bar{\mathfrak{h}}^{*}}\overline{\frak{le}}(n)_{\alpha},$$
where
$$\overline{\frak{le}}(n)_{\alpha}=\mathrm{Span}_{\mathbb{F}}\left\{x\in \overline{\frak{le}}(n)\mid [h, x]=\alpha(h)x \;\mbox{for} \; h\in \bar{\mathfrak{h}}\right\}.$$
We denote the  basis of  $\bar{\mathfrak{h}}^{*},$
$$\varepsilon_{i}=\left(\mathrm{De}_{x_{i}x_{i'}}\right)^{*},\; \delta=\left(\sum_{j=1}^{2n}x_{j}\partial_{j}\right)^{*} \; \mbox{for}\; i \leq n.$$
We still write $\varepsilon_{i}$ for $\varepsilon_{i}|_{\mathfrak{h}}$, if   no confusion occurs.
Clearly,
\[ \mathrm{De}_{x_{i}}\in\left\{
 \begin{array}{ll}
\overline{\frak{le}}(n)_{-\varepsilon_{i}-\delta}& \mbox{ if}\; i \leq n \\
\overline{\frak{le}}(n)_{\varepsilon_{i'}-\delta}& ~\mbox{if}\; i>n.
\end {array}
\right.
\]
Note that $\overline{\frak{le}}(n)_{[0]}$ has a standard triangular decomposition
$$\overline{\frak{le}}(n)_{[0]}=\mathfrak{n}_{[0]}^{-}\oplus \bar{\mathfrak{h}}\oplus\mathfrak{n}_{[0]}^{+},$$
where
$$\mathfrak{n}_{[0]}^{-}=\mathrm{Span}_{\mathbb{F}}\{\mathrm{De}_{x_{i}x_{n+j}}\mid n\geq i>j\}+\mathrm{Span}_{\mathbb{F}}\{\mathrm{De}_{x_{k}x_{l}}\mid   k, l>n\},$$
$$\mathfrak{n}_{[0]}^{+}=\mathrm{Span}_{\mathbb{F}}\{\mathrm{De}_{x_{i}x_{n+j}}\mid   i <j\leq n \}+\mathrm{Span}_{\mathbb{F}}\{\mathrm{De}_{x_{k}x_{l}}\mid   k, l\leq n\}.$$
Then $\overline{\frak{le}}(n)$ has a standard triangular decomposition
$$\overline{\frak{le}}(n)=\mathfrak{n}^{-}_{0}\oplus \bar{\mathfrak{h}}\oplus\mathfrak{n}^{+}_{0},$$
where
$$\mathfrak{n}^{-}_{0}=\mathfrak{n}^{-}_{[0]}\oplus \overline{\frak{le}}(n)_{[-1]},\;\mathfrak{n}^{+}_{0}=\mathfrak{n}^{+}_{[0]}\oplus_{i>0}\overline{\frak{le}}(n)_{[i]}.$$

\subsection{Root reflections}
We now define a sequence of root reflections  in the order: $\gamma_{-\varepsilon_{1}-\delta},\ldots,\gamma_{-\varepsilon_{n}-\delta}, \gamma_{\varepsilon_{n}-\delta},\ldots,$ $\gamma_{\varepsilon_{1}-\delta}.$
Firstly we define
$$\mathfrak{n}^{+}_{1}=\gamma_{-\varepsilon_{1}-\delta}(\mathfrak{n}^{+}_{0})$$
to be obtained by removing the subspace
$$W_{1}=\mathrm{Span}_{\mathbb{F}}\left\{\mathrm{De}_{x^{(\epsilon_{n+1}+\epsilon_{i}+\epsilon_{n+j})}}, \mathrm{De}_{x^{(\epsilon_{n+1}+\epsilon_{n+i}+\epsilon_{n+j})}}\mid n\geq i\geq j \right\}$$
from $\mathfrak{n}^{+}_{0}$ and adding $\mathbb{F}\mathrm{De}_{x_{1}}$.
Put
$$\mathfrak{n}^{-}_{1}=W_{1}\oplus\mathfrak{n}^{-}_{[0]}\oplus \sum_{i=2}^{2n}\mathbb{F}\mathrm{De}_{x_{i}}.$$
Then   $\overline{\frak{le}}(n)=\mathfrak{n}^{-}_{1}\oplus \bar{\mathfrak{h}} \oplus \mathfrak{n}^{+}_{1}$ is a new triangular decomposition.
Suppose we have defined
$$\mathfrak{n}^{+}_{k-1}=\gamma_{-\varepsilon_{k-1}-\delta}\cdots
\gamma_{-\varepsilon_{1}-\delta}(\mathfrak{n}^{+}_{0}) \;\mbox{for}\; 2\leq k\leq n.$$
Then we define
$$\mathfrak{n}^{+}_{k}=\gamma_{-\varepsilon_{k}-\delta}(\mathfrak{n}^{+}_{k-1})$$
to be obtained by removing the subspace $W_{k}$ spanned by all $\mathrm{De}_{x^{(\underline{r}+\epsilon_{n+i}+\epsilon_{n+j})}} $ and $ \mathrm{De}_{x^{(\underline{r}+\epsilon_{i}+\epsilon_{n+j})}}$ with
\begin{eqnarray*}
 r_{1}=\cdots=r_{n}=r_{n+k+1}=\cdots =r_{2n}=0, r_{n+k}= 1, n\geq i\geq j
\end{eqnarray*}
from $\mathfrak{n}^{+}_{k-1}$ and adding $\mathbb{F}\mathrm{De}_{x_{k}} $.
Put
$$\mathfrak{n}^{-}_{k}=\sum_{i=1}^{k}W_{i}\oplus\mathfrak{n}^{-}_{[0]}\oplus \sum_{i=k+1}^{2n}\mathbb{F}\mathrm{De}_{x_{i}}.$$
Then   $\overline{\frak{le}}(n)=\mathfrak{n}^{-}_{k}\oplus \bar{\mathfrak{h}} \oplus \mathfrak{n}^{+}_{k}$ is a new triangular decomposition.
Next we define
$$\mathfrak{n}^{+}_{n+1}=\gamma_{\varepsilon_{n}-\delta}(\mathfrak{n}^{+}_{n})$$
to be obtained by removing root the space $W_{n+1}$ spanned by
\begin{eqnarray*}
 \{ \mathrm{De}_{x^{(\underline{r}+\epsilon_{n+i}+\epsilon_{n+j})}}, \mathrm{De}_{x^{(\underline{r}+\epsilon_{i}+\epsilon_{n+j})}} \mid
  r_{1}=\cdots=r_{n-1}=0, r_{n}\geq1, n\geq i\geq j \}
\end{eqnarray*}
from $\mathfrak{n}^{+}_{n}$ and adding $\mathbb{F}\mathrm{De}_{x_{2n}}$.
Put
$$\mathfrak{n}^{-}_{n+1}=\sum_{i=1}^{n+1}W_{i}\oplus\mathfrak{n}^{-}_{[0]}\oplus \sum_{i=n+1}^{2n-1}\mathbb{F}\mathrm{De}_{x_{i}}.$$
Then   $\overline{\frak{le}}(n)=\mathfrak{n}^{-}_{n+1}\oplus \bar{\mathfrak{h}} \oplus \mathfrak{n}^{+}_{n+1}$ is a new triangular decomposition.
Suppose we have defined
$$\mathfrak{n}^{+}_{n+k}=\gamma_{\varepsilon_{n-(k-1)}-\delta}\cdots\gamma_{\varepsilon_{n}-\delta}
(\mathfrak{n}^{+}_{n}) \;\mbox{for}\; 2\leq k\leq n.$$
Finally  we define
$$\mathfrak{n}^{+}_{n+k+1}=\gamma_{\varepsilon_{n-k}-\delta}(\mathfrak{n}^{+}_{n+k})$$
to be obtained by removing the space $W_{n+k+1}$ spanned by all $ \mathrm{De}_{x^{(\underline{r}+\epsilon_{n+i}+\epsilon_{n+j})}}$ and $\mathrm{De}_{x^{(\underline{r}+\epsilon_{i}+\epsilon_{n+j})}}$ with
\begin{eqnarray*}
 r_{1}=\cdots=r_{n-k-1}=0, r_{n-k}\geq 1,\cdots,r_{n}\geq1, n\geq i\geq j
\end{eqnarray*}
from $\mathfrak{n}^{+}_{n+k}$ and adding $\mathbb{F} \mathrm{De}_{x_{2n-k}}$.
Put
$$\mathfrak{n}^{-}_{n+k+1}=\sum_{i=1}^{n+k+1}W_{i}\oplus\mathfrak{n}^{-}_{[0]}\oplus \sum_{i=k+1}^{2n-k-1}\mathbb{F}\mathrm{De}_{x_{i}}.$$
Then   $\overline{\frak{le}}(n)=\mathfrak{n}^{-}_{n+k+1}\oplus \bar{\mathfrak{h}} \oplus \mathfrak{n}^{+}_{n+k+1}$ is a new triangular decomposition.
Note that
$$\mathfrak{n}^{+}_{2n}=\mathfrak{n}^{+}_{[0]}\oplus\overline{\frak{le}}(n)_{[-1]}, \; \mathfrak{n}^{-}_{2n}=\mathfrak{n}^{-}_{[0]}\oplus_{i>0}\overline{\frak{le}}(n)_{[i]}.$$

\subsection{Restricted Vermas module and  Kac modules}

Write $\mathfrak{g}$ for $\frak{le}(n)$ or $\overline{\frak{le}}(n).$ Suppose $\mathfrak{g}$ (resp. $\mathfrak{g}_{[0]}$) has  a triangular decomposition
$$\mathfrak{g}=N^{-}\oplus H_{\mathfrak{g}}\oplus N^{+}\;(\mbox{resp.}\;  \mathfrak{g}_{[0]}=N^{-}_{[0]}\oplus H_{\mathfrak{g}}\oplus N^{+}_{[0]}),$$  where $H_{\mathfrak{g}}=\mathfrak{h}\oplus \delta_{\mathfrak{g},\overline{\frak{le}}(n)}\mathbb{F}\sum_{ i=1}^{2n}x_{i}\partial_{i}$. Let $V=V_{\bar{0}}\oplus V_{\bar{1}}$ be a $\mathfrak{g}$-module (resp. $\mathfrak{g}_{[0]}$-module). Suppose for $\lambda\in H_{\mathfrak{g}}^{*}$ there is a nonzero vector $v_{\lambda}\in V_{\bar{0}}\cup V_{\bar{1}}$
such that
\begin{equation}\label{liukks}
x\cdot v_{\lambda}=0,\; h \cdot v_{\lambda}=\lambda(h)v_{\lambda},\;\mbox{for\;} x\in N^{+} \;(\mbox{resp.}\;x\in N^{+}_{[0]}),\; h\in H_{\mathfrak{g}}.
\end{equation}
Then  $v_{\lambda}$ is called a highest weight  vector of $V$ with respect to $B=H_{\mathfrak{g}}\oplus N^{+}$  (resp. $B_{[0]}=H_{\mathfrak{g}}\oplus N^{+}_{0}$).
If $V$ is  a restricted $\mathfrak{g}$-module (resp. $\mathfrak{g}_{[0]}$-module), then $\lambda\in \Lambda_{\mathfrak{g}}$, where
\begin{eqnarray*}
&&\Lambda_{\mathfrak{g}}=\mathrm{Span}_{\mathbb{F}_{p}}\{\varepsilon_{1},\ldots,\varepsilon_{n},\delta_{\mathfrak{g},\overline{\frak{le}}(n)}\delta\}.
\end{eqnarray*}
 Conversely, for any given  $\lambda\in \Lambda_{\mathfrak{g}}$, there is a one-dimensional restricted  $B$-module (resp. $B_{[0]}$-module) $\mathbb{F}v_{\lambda}$ satisfying (\ref{liukks}). Write $L_{\mathfrak{g}}^{B}(\lambda)$  (resp. $L_{\mathfrak{g}_{[0]}}^{B_{[0]}}(\lambda)$) for the unique irreducible $\mathbb{Z}_{2}$-graded quotient of restricted Verma module $\mathbf{u}(\mathfrak{g})\otimes_{\mathbf{u}(B)}\mathbb{F}v_{\lambda}$ (resp. $\mathbf{u}(\mathfrak{g}_{[0]})\otimes_{\mathbf{u}(B_{[0]})}\mathbb{F}v_{\lambda}$) of $\mathfrak{g}$  (resp. $\mathfrak{g}_{[0]}$) with respect to $B$   (resp.  $B_{[0]}$).
Recall that, for $\lambda\in\Lambda_{\mathfrak{g}}$,
$$
I_{\mathfrak{g}}\left(\lambda\right)=\mathbf{u}(\mathfrak{g})\otimes_{\mathbf{u}(\oplus_{i\geq 0}\mathfrak{g}_{[i]})}L_{\mathfrak{g}_{[0]}}^{ \mathfrak{n}_{[0]}^{+}\oplus H_{\mathfrak{g}}}\left(\lambda \right)
$$  is called a restricted Kac module of $\mathfrak{g}$.
We know that $I_{\mathfrak{g}}\left(\lambda\right)$ has a unique irreducible quotient module $L_{\mathfrak{g}}^{\mathfrak{n}_{0}^{+}\oplus H_{\mathfrak{g}}}\left(\lambda\right)$  and $$\left\{L_{\mathfrak{g}}^{\mathfrak{n}_{0}^{+}\oplus H_{\mathfrak{g}}}(\lambda)\mid \lambda\in \Lambda_{\mathfrak{g}}\right\}$$ consists of  all the irreducible restricted $\mathfrak{g}$-modules.
 For   $\lambda\in\Lambda_{\mathfrak{le}(n)},$  it is easily seen that $I_{\frak{le}(n)}(\lambda)$ is
   irreducible as $\frak{le}(n)$-module if and only if $I_{\overline{\frak{le}}(n)}(\lambda)$ is irreducible as $\overline{\frak{le}}(n)$-module.
In the below, we
 shall  study the irreducibility of $I_{\overline{\frak{le}}(n)}(\lambda)$, where $\lambda\in\Lambda_{\overline{\mathfrak{le}}(n)}$.

\subsection{Irreducible quotients of restricted Verma modules} For $  i \leq n,$ put
$$\mathfrak{h}_{i}=\mathrm{Span}_{\mathbb{F}}\{\mathrm{De}_{x_{j}x_{j'}}\mid   j\leq n, j\neq i \}.$$
By the definition of $\mathfrak{h}_{i}$, one can easily get
$$\mathfrak{h}_{i}=\{h\in \bar{\mathfrak{h}}\mid \varepsilon_{i}(h)=\delta(h)=0\}.$$
 For $0 \leq i \leq 2n,$ put
$\mathfrak{b}_{i}=\mathfrak{n}^{+}_{i}\oplus\bar{\mathfrak{h}}.$
Then we have the following proposition.
\begin{proposition}\label{pl1}
Let $ i\leq n,$  and  $\lambda\in \Lambda_{\overline{\mathfrak{le}}(n)}.$

(1) If $\lambda(\mathfrak{h}_{i})=0,$ then
$$
L_{\overline{\frak{le}}(n)}^{\mathfrak{b}_{i-1}}(\lambda)\cong L_{\overline{\frak{le}}(n)}^{\mathfrak{b}_{i}}(\lambda).
$$

(2) If $\lambda(\mathfrak{h}_{i})\neq0,$ then
$$
L_{\overline{\frak{le}}(n)}^{\mathfrak{b}_{i-1}}(\lambda)\cong L_{\overline{\frak{le}}(n)}^{\mathfrak{b}_{i}}(\lambda-\varepsilon_{i}-\delta).$$

(3) If $\lambda(\mathfrak{h}_{n-i+1})\neq0,$ then
$$L_{\overline{\frak{le}}(n)}^{\mathfrak{b}_{n+i-1}}(\lambda)\cong L_{\overline{\frak{le}}(n)}^{\mathfrak{b}_{n+i}}(\lambda+(p-1)(\varepsilon_{n-i+1}-\delta)).
$$

(4) If $\lambda=a\delta,$ where $a\in \mathbb{F}_{p},$ then
$$
L_{\overline{\frak{le}}(n)}^{\mathfrak{b}_{n+i-1}}(\lambda)\cong L_{\overline{\frak{le}}(n)}^{\mathfrak{b}_{n+i}}(\lambda).
$$

(5) If $\lambda=a\varepsilon_{n-i+1}+b\delta,$ where $ a, b\in \mathbb{F}_{p},$ $a\neq 0, 1,$ then
$$L_{\overline{\frak{le}}(n)}^{\mathfrak{b}_{n+i-1}}(\lambda)\cong L_{\overline{\frak{le}}(n)}^{\mathfrak{b}_{n+i}}(\lambda+(p-1)(\varepsilon_{n-i+1}-\delta)).
$$

(6) If $\lambda=\varepsilon_{n-i+1}+b\delta,$ where $b\in \mathbb{F}_{p},$ then
$$L_{\overline{\frak{le}}(n)}^{\mathfrak{b}_{n+i-1}}(\lambda)\cong L_{\overline{\frak{le}}(n)}^{\mathfrak{b}_{n+i}}(\lambda+(p-2)(\varepsilon_{n-i+1}-\delta)).
$$

\end{proposition}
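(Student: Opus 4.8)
The plan is to treat each of the six assertions as an instance of a single reflection principle. The module $L:=L_{\overline{\frak{le}}(n)}^{\mathfrak{b}_{i-1}}(\lambda)$ (resp. $L_{\overline{\frak{le}}(n)}^{\mathfrak{b}_{n+i-1}}(\lambda)$) is irreducible and finite dimensional, so if I can exhibit a nonzero vector $v'\in L$ that is a highest weight vector for the new Borel $\mathfrak{b}_{i}$ (resp. $\mathfrak{b}_{n+i}$) of some weight $\mu$, then $L$ is a highest weight module generated by $v'$ and, being irreducible, must coincide with $L_{\overline{\frak{le}}(n)}^{\mathfrak{b}_{i}}(\mu)$. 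Thus each statement reduces to (a) producing the candidate singular vector, (b) checking that it is annihilated by the new positive part, (c) checking that it is nonzero, and (d) reading off its weight. Since consecutive Borels differ only by moving $\mathbb{F}\mathrm{De}_{x_i}$ (resp. $\mathbb{F}\mathrm{De}_{x_{2n-i+1}}$) across the triangular decomposition together with the space $W_i$ (resp. $W_{n+i}$), the candidate is always built from the single root vector that changes sides, applied to the old highest weight vector $v_\lambda$.

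For (1) and (2) the root $-\varepsilon_i-\delta$ is realized by the \emph{odd} vector $e:=\mathrm{De}_{x_i}$ ($i\le n$), which satisfies $e^2=\tfrac12[e,e]=\mathrm{De}_{\{x_i,x_i\}_B}=0$; so I would take the candidate to be $w:=e\cdot v_\lambda$, of weight $\lambda-\varepsilon_i-\delta$. To see that $w$ is $\mathfrak{b}_i$-singular I would check $\mathfrak{n}^+_i\cdot w=0$: the generator $e$ kills $w$ because $e^2=0$, while for $g\in\mathfrak{n}^+_{i-1}\setminus W_i$ one has $gw=\pm e(gv_\lambda)+[g,e]v_\lambda=[g,e]v_\lambda$, and the required vanishing follows from the commutator relations in $\overline{\frak{le}}(n)$, which place $[g,e]$ either in $\mathfrak{n}^+_{i-1}$ or in $\bar{\mathfrak{h}}$ with the appropriate weight. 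The dichotomy between (1) and (2) is exactly whether $w=0$. The key point is that there is a whole family of $\mathfrak{b}_{i-1}$-positive root vectors $f$ of weight $\varepsilon_i+\delta$, each necessarily involving a second direction $j\neq i$ (no root vector of that weight uses the $i$-direction alone), and $fw=[f,e]v_\lambda=\lambda([f,e])v_\lambda$ with the brackets $[f,e]$ ranging over a spanning set of $\mathfrak{h}_i$. Hence $w\neq 0$ as soon as $\lambda(\mathfrak{h}_i)\neq 0$, giving (2), whereas $\lambda(\mathfrak{h}_i)=0$ forces $e\cdot v_\lambda=0$, so $v_\lambda$ itself is $\mathfrak{b}_i$-singular of unchanged weight, giving (1).

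For (3)--(6) the root $\varepsilon_{n-i+1}-\delta$ is realized by the \emph{even} vector $e_\alpha:=\mathrm{De}_{x_{2n-i+1}}$, which a direct computation identifies with $\pm\partial_{n-i+1}$; its opposite $e_{-\alpha}:=\mathrm{De}_{x^{(2\epsilon_{n-i+1}+\epsilon_{2n-i+1})}}$ is $\mathfrak{b}_{n+i-1}$-positive, so $e_{-\alpha}v_\lambda=0$, and $h_\alpha:=[e_\alpha,e_{-\alpha}]=\pm\mathrm{De}_{x_{n-i+1}x_{2n-i+1}}$ acts on $v_\lambda$ by the scalar $a$, the $\varepsilon_{n-i+1}$-coefficient of $\lambda$. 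Thus $v_\lambda$ is a highest weight vector for the rank one subalgebra $\mathfrak{s}_\alpha=\langle e_\alpha,h_\alpha,e_{-\alpha}\rangle$, and the new $\mathfrak{b}_{n+i}$-singular vector must be the bottom of the $e_\alpha$-string, namely $v':=e_\alpha^{\,k}v_\lambda$ for the largest $k$ with $e_\alpha^{\,k}v_\lambda\neq 0$; its weight is then $\lambda+k(\varepsilon_{n-i+1}-\delta)$. Because the module is restricted and $e_\alpha^{[p]}=\partial_{n-i+1}^{\,p}=0$, one has $e_\alpha^{\,p}v_\lambda=0$, so $k\le p-1$. Generically the string is full, $k=p-1$: the $\mathfrak{s}_\alpha$-relations $e_{-\alpha}e_\alpha^{\,k}v_\lambda=c_k(a)\,e_\alpha^{\,k-1}v_\lambda$ only produce $\mathfrak{s}_\alpha$-singular vectors, which need not be $\overline{\frak{le}}(n)$-singular and hence need not vanish in the big irreducible module, so nothing truncates the string before $p-1$. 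This accounts for (3) (where $\lambda(\mathfrak{h}_{n-i+1})\neq 0$) and for (5) (where $a\neq 0,1$). The degeneracies are global: in (4), $\lambda=a\delta$, one checks $e_\alpha v_\lambda=0$ directly, so $k=0$; in (6), $\lambda=\varepsilon_{n-i+1}+b\delta$ forces an extra relation killing $e_\alpha^{\,p-1}v_\lambda$, leaving $k=p-2$.

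The main obstacle in both regimes is establishing that the candidate is genuinely singular and genuinely nonzero. For the singularity check this means controlling the commutators $[g,e]$ (resp. the action of the new positive part on $e_\alpha^{\,k}v_\lambda$) for \emph{all} $g$ in the new positive part, not merely for the matching root vectors; for the nonvanishing, and above all for pinning down the exact value of $k$ in the boundary cases, one must compute the iterated action $e_\alpha^{\,k}v_\lambda$ explicitly against the divided-power realization and track precisely when the string truncates one step early. The separation of $a=1$ (yielding $p-2$) from the generic $a\neq 0,1$ (yielding $p-1$), together with the collapse to $k=0$ when $\lambda=a\delta$, is the delicate point and will require the explicit structure of the restricted module rather than the reflection principle alone.
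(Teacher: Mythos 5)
Your skeleton is the paper's: move the single root vector $\mathrm{De}_{x_i}$ (resp.\ $\mathrm{De}_{x_{(n-i+1)'}}$) across the decomposition, apply it (resp.\ its powers) to the old highest weight vector, and decide whether the result vanishes. The candidate singular vectors and the identification $\mathrm{De}_{x_{(n-i+1)'}}=\pm\partial_{n-i+1}$, $\mathrm{De}_{x_{(n-i+1)'}}^{p}=0$ are all correct, and your treatment of (2) via the operators $f$ with $[f,\mathrm{De}_{x_i}]$ spanning $\mathfrak{h}_i$ is exactly the paper's computation with $\mathrm{De}_{x^{(\epsilon_{i'}+\epsilon_j+\epsilon_{j'})}}$. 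But two steps that you defer or assert are where the actual content lies, and one of them is argued fallaciously.

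First, your justification that the string is full in (3) and (5) --- ``the $\mathfrak{s}_\alpha$-singular vectors need not be $\overline{\frak{le}}(n)$-singular and hence need not vanish, so nothing truncates the string before $p-1$'' --- is not a proof: absence of an obvious reason to vanish does not imply non-vanishing, and case (6) itself shows the string \emph{does} truncate at $p-2$ for $a=1$. The paper proves $\mathrm{De}_{x_{(n-i+1)'}}^{p-1}\cdot v\neq 0$ by exhibiting explicit raising operators that return a nonzero multiple of a lower power and inducting downward: in (3) it is $\mathrm{De}_{x^{(\epsilon_{(n-i+1)'}+\epsilon_{j}+\epsilon_{j'})}}$ with $\lambda(\mathrm{De}_{x_jx_{j'}})\neq 0$, and in (5) it is $\mathrm{De}_{x^{((p-1)\epsilon_{n-i+1}+\epsilon_{(n-i+1)'})}}$ together with $\mathrm{De}_{x^{(2\epsilon_{n-i+1}+\epsilon_{(n-i+1)'})}}$, whose actions produce the scalars $a$ and $1-a$; the hypotheses $\lambda(\mathfrak{h}_{n-i+1})\neq 0$, resp.\ $a\neq 0,1$, enter precisely as the non-vanishing of these scalars, and $a=1$ is exactly where $1-a=0$ and (6) splits off. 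You acknowledge this computation is needed but do not supply it, and the $c_k(a)$ you would need are never computed. Second, the vanishing claims in (1), (4) and (6) (``$\lambda(\mathfrak{h}_i)=0$ forces $\mathrm{De}_{x_i}\cdot v=0$'', ``one checks $e_\alpha v_\lambda=0$ directly'', ``an extra relation kills $e_\alpha^{p-1}v_\lambda$'') are asserted without a mechanism. The missing idea is the uniqueness, up to proportionality, of the highest weight vector of the \emph{irreducible} module $L_{\overline{\frak{le}}(n)}^{\mathfrak{b}_{i-1}}(\lambda)$ with respect to $\mathfrak{b}_{i-1}$: under the stated hypotheses the candidate, if nonzero, is annihilated by all of $\mathfrak{b}_{i-1}$ (resp.\ $\mathfrak{b}_{n+i-1}$) yet has a different weight, which is the contradiction the paper uses. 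Without that argument the dichotomies (1)/(2) and (4)/(5)/(6) are not established.
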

\begin{proof}
(1)
Let $0\neq v$  be a highest weight vector of  $L_{\overline{\frak{le}}(n)}^{\mathfrak{b}_{i-1}}(\lambda)$ with respect to $\mathfrak{b}_{i-1}$. We claim that $v$ is also a highest weight vector of  $L_{\overline{\frak{le}}(n)}^{\mathfrak{b}_{i-1}}(\lambda)$ with respect to $\mathfrak{b}_{i}$. By the definition of $\gamma_{-\varepsilon_{i}-\delta}$,  we have to check that $\mathrm{De}_{x_{i}}\cdot v=0$. Suppose  $\mathrm{De}_{x_{i}}\cdot v\neq0$. One can easily get
$$X\cdot (\mathrm{De}_{x_{i}}\cdot v)=0 \;\mbox{for}\; X\in \mathfrak{b}_{i-1}.$$
Therefore, $\mathrm{De}_{x_{i}}\cdot v$  is a highest weight vector of  $L_{\overline{\frak{le}}(n)}^{\mathfrak{b}_{i-1}}(\lambda)$  with respect to $\mathfrak{b}_{i-1}$, which contradicts the uniqueness of highest weight vector with respect to $\mathfrak{b}_{i-1}$ (up to proportionality).

(2)
Let $0\neq v$  be a highest weight vector of  $L_{\overline{\frak{le}}(n)}^{\mathfrak{b}_{i-1}}(\lambda)$ with respect to $\mathfrak{b}_{i-1}$. In this case there exists $j\neq i,$ where $ j \leq n$,
such that $\lambda\left(\mathrm{De}_{x_{j}x_{j'}}\right)\neq0.$
Since
\begin{eqnarray*}
\mathrm{De}_{x^{(\epsilon_{i'}+\epsilon_{j}+\epsilon_{j'})}}\cdot \mathrm{De}_{x_{i}}\cdot v&=& -\mathrm{De}_{x_{i}}\cdot \mathrm{De}_{x^{(\epsilon_{i'}+\epsilon_{j}+\epsilon_{j'})}} \cdot v\\
&&+\lambda\left(\left[\mathrm{De}_{x_{i}},\mathrm{De}_{x^{(\epsilon_{i'}+\epsilon_{j}+\epsilon_{j'})}} \right]\right)v\\
&=&\lambda\left(\mathrm{De}_{x_{j}x_{j'}}\right)v\neq0,
\end{eqnarray*}
we have $\mathrm{De}_{x_{i}}\cdot v\neq0.$ Then   $\mathrm{De}_{x_{i}}\cdot v$ is a highest weight vector of $L_{\overline{\frak{le}}(n)}^{\mathfrak{b}_{i-1}}(\lambda)$ with respect to $\mathfrak{b}_{i}.$

(3)  In this case there exists  $ j\neq n-i+1$, where $j \leq n$,
such that $\lambda\left(\mathrm{De}_{x_{j}x_{j'}}\right)\neq0.$ Let $0\neq v$  be a highest weight vector of  $L_{\overline{\frak{le}}(n)}^{\mathfrak{b}_{n+i-1}}(\lambda)$ with respect to $\mathfrak{b}_{n+i-1}.$
We claim that
$$ \mathrm{De}_{x_{(n-i+1)'}}^{p-1}\cdot v\neq0.$$ Suppose not. Then by applying $\mathrm{De}_{x^{(\epsilon_{n-i+1}+\epsilon_{j}+\epsilon_{j'})}},$ we get
\begin{eqnarray*}
0&=&\mathrm{De}_{x^{(\epsilon_{(n-i+1)'}+\epsilon_{j}+\epsilon_{j'})}}\cdot \mathrm{De}_{x_{(n-i+1)'}}^{p-1}\cdot v\\
&=& -\mathrm{De}_{x_{(n-i+1)'}}\cdot \mathrm{De}_{x^{(\epsilon_{(n-i+1)'}+\epsilon_{j}+\epsilon_{j'})}}\cdot \mathrm{De}_{x_{(n-i+1)'}}^{p-2}\cdot v\\
&&+\lambda\left(\left[\mathrm{De}_{x_{(n-i+1)'}},\mathrm{De}_{x^{(\epsilon_{(n-i+1)'}+\epsilon_{j}+\epsilon_{j'})}}\right]\right)
\mathrm{De}_{x_{(n-i+1)'}}^{p-2}\cdot v\\
&=&(1-p)\lambda\left(\mathrm{De}_{x_{j}x_{j'}}\right)\mathrm{De}_{x_{(n-i+1)'}}^{p-2}\cdot v,
\end{eqnarray*}
hence $\mathrm{De}_{x_{(n-i+1)'}}^{p-2}\cdot v=0.$ By repeated applications of $\mathrm{De}_{x^{(\epsilon_{(n-i+1)'}+\epsilon_{j}+\epsilon_{j'})}},$ we can get $v=0,$ a contradiction.
A direct verification  shows that  $\mathrm{De}_{x_{(n-i+1)'}}^{p-1}\cdot v$ is a highest weight vector of $L_{\overline{\frak{le}}(n)}^{\mathfrak{b}_{n+i-1}}(\lambda)$ with respect to $\mathfrak{b}_{n+i}.$

(4) The proof is similar to the one of (1).

(5) Let $0\neq v$  be a highest weight vector of  $L_{\overline{\frak{le}}(n)}^{\mathfrak{b}_{n+i-1}}(\lambda)$ with respect to $\mathfrak{b}_{n+i-1}.$
Using the fact that
\begin{eqnarray}\label{lye1}
\mathrm{De}_{x^{((p-1)\epsilon_{n-i+1}+\epsilon_{(n-i+1)'})}}\cdot \mathrm{De}_{x_{(n-i+1)'}}^{(p-2)}\cdot v=\lambda\left(\mathrm{De}_{x_{n-i+1}x_{(n-i+1)'}}\right)v=av\neq 0,
\end{eqnarray}
we get
$$ \mathrm{De}_{x_{(n-i+1)'}}^{(p-2)}\cdot v\neq 0.$$
Using the fact that
\begin{eqnarray*}
&&\mathrm{De}_{x^{(2\epsilon_{n-i+1}+\epsilon_{(n-i+1)'})}}\cdot \mathrm{De}_{x_{(n-i+1)'}}^{(p-1)}\cdot v\\
&=&\left((p-1)\lambda+\frac{1}{2}(p-1)(p-2)(\varepsilon_{n-i+1}-\delta)\right)\left(\mathrm{De}_{x_{n-i+1}x_{(n-i+1)'}}\right)
\mathrm{De}_{x_{(n-i+1)'}}^{p-2}v\\
&=&(1-a)\mathrm{De}_{x_{(n-i+1)'}}^{p-2}v\neq 0,
\end{eqnarray*}
we get $\mathrm{De}_{x_{(n-i+1)'}}^{(p-1)}\cdot v\neq 0.$ Then
   $\mathrm{De}_{x_{(n-i+1)'}}^{p-1}\cdot v$ is a highest weight vector of $L_{\overline{\frak{le}}(n)}^{\mathfrak{b}_{n+i-1}}(\lambda)$ with respect to $\mathfrak{b}_{n+i}.$

(6) Let $0\neq v$  be a highest weight vector of  $L_{\overline{\frak{le}}(n)}^{\mathfrak{b}_{n+i-1}}(\lambda)$  with respect to $\mathfrak{b}_{n+i-1}.$
 We claim $\mathrm{De}_{x_{(n-i+1)'}}^{(p-1)}\cdot v= 0.$ If not, then
$\mathrm{De}_{x_{(n-i+1)'}}^{(p-1)}\cdot v$ is a highest weight vector of $\mathfrak{b}_{n+i-1}$. Eq. (\ref{lye1}) implies $\mathrm{De}_{x_{(n-i+1)'}}^{(p-2)}\cdot v\neq 0.$ Then  $\mathrm{De}_{x_{(n-i+1)'}}^{p-2}\cdot v$ is a highest weight vector of $L_{\overline{\frak{le}}(n)}^{\mathfrak{b}_{n+i-1}}(\lambda)$  with respect to $\mathfrak{b}_{n+i}.$

\end{proof}

\section{Irreducibility of restricted Kac modules}

Recall the symplectic supergroup
$$
\mathrm{SP}(n,\mathbb{F})=\left\{A\in \mathrm{GL}(2n,\mathbb{F})\mid A^{T}JA=J\right\},
$$
where
$$
J=\left(
   \begin{array}{cc}
     0 & I_{n} \\
     -I_{n} & 0 \\
   \end{array}
 \right).
$$
The conformal symplectic supergroup $\mathrm{CSP}(n,\mathbb{F})$  is a direct product of the symplectic group $\mathrm{SP}(n, \mathbb{F})$ with the one dimensional
multiplicative supergroup $\mathbb{F}^{*}.$
Each $\phi\in \mathrm{GL}({\mathcal{O}(n,\underline{1})_{[1]},\mathbb{F}})\times \mathrm{GL}({\Lambda(n)_{[1]},\mathbb{F}})$
can be extended to a $\mathbb{Z}$-homogeneous element of $\mathrm{Aut}(\mathcal{O}(n)),$ which is still denoted  by $\phi.$ Now we define
$$f_{\phi}(x)=\phi^{-1}x\phi,\; \mbox{for}\; x\in \overline{\frak{le}}(n).$$
Let $\phi\in \mathrm{GL}({\mathcal{O}(n,\underline{1})_{[1]},\mathbb{F}})\times \mathrm{GL}({\Lambda(n)_{[1]},\mathbb{F}}).$  If $\phi\in \mathrm{CSP}(n,\mathbb{F}),$  then $f_{\phi}\in \mathrm{Aut}(\overline{\frak{le}}(n)).$
(See
\MyRoman{1} in Appendix).

\subsection{$(\mathbf{u}(\overline{\frak{le}}(n)),\mathfrak{T})$-module} Let $\mathfrak{T}$ be the canonical maximal torus of the $\mathrm{CSP}(n,\mathbb{F}).$ Then
$$\mathfrak{T}\cong\left\{\mathrm{diag}(tt_{1},\ldots,tt_{n},tt_{1}^{-1},\ldots,tt_{n}^{-1})\mid t, t_{i}\in \mathbb{F}^{*}\right\}$$
and the Lie algebra of $\mathfrak{T}$ coincides with $\bar{\mathfrak{h}}.$ Let $X(\mathfrak{T})$ be the character group of $\mathfrak{T}.$
Then
$$X(\mathfrak{T})=\sum_{i=1}^{n+1}\mathbb{Z}\Lambda_{i},$$
where, for   $t, t_{i}\in \mathbb{F}^{*},$
\[ \Lambda_{i}(\mathrm{diag}(tt_{1},\ldots,tt_{n},tt_{1}^{-1},\ldots,tt_{n}^{-1}))=\left\{
 \begin{array}{ll}
t_{i}^{-1} & if\; 1 \leq i \leq n \\
t & if\;  i=n+1.
\end {array}
\right.
\]
By definition, a rational $\mathfrak{T}$-module $V$ means that
$V=\oplus_{\lambda\in X(\mathfrak{T})}V_{\lambda},$ where
$$
V_{\lambda}=\{v\in V\mid \overline{t}(v)=\lambda(\overline{t})v  \;\mbox{for}\; \overline{t}\in \mathfrak{T}\}.
$$
Set
 $$
 \mathfrak{J}=\{\mathrm{diag}(t,\ldots, t)\mid t\in \mathbb{F}^{*}\}.
 $$
 Then we have a $\mathbb{Z}$-graded decomposition for a rational $\mathfrak{T}$-module $V=\oplus_{s\in \mathbb{Z}} V_{s}$ with
$$
V_{s}=\left\{v\in V\mid \overline{t}(v)=t^{s}v \;\mbox{for}\; \overline{t}=\mathrm{diag}(t,\ldots, t)\in \mathfrak{J}\right\}.
$$
Put
$$
\mathcal{W}_{\mathfrak{J}}(V)=\{s\in \mathbb{Z}\mid V_{s}\neq 0\}.
$$
We define the action of $\mathfrak{T}$ on $\overline{\frak{le}}(n)$  by
$$
\overline{t}(a)=f_{\overline{t}}(a) \;\mbox{for}\; \overline{t}\in \mathfrak{T}, a\in \overline{\frak{le}}(n).
$$
 $U(\overline{\frak{le}}(n))$ and its canonical subalgebras become rational $\mathfrak{T}$-modules with the action given by
$$
\mathrm{Ad}(\overline{t})(a_{1}\cdots a_{l})=\overline{t}(a_{1})\cdots \overline{t}(a_{l}),
$$
where $a_{i}\in \overline{\frak{le}}(n)$ and  $\overline{t}\in \mathfrak{T}.$
Since
$$\mathrm{Ad}(\overline{t})(x^{[p]})=\mathrm{Ad}(\overline{t})(x)^{p}  \;\mbox{for}\; x\in \overline{\frak{le}}(n), \overline{t}\in \mathfrak{T},$$
 $\mathbf{u}(\overline{\frak{le}}(n))$ is  also a rational $\mathfrak{T}$-module.
 A finite
dimensional superspace $V=V_{\bar{0}}\oplus V_{\bar{1}}$  is  called a $(\mathbf{u}(\overline{\frak{le}}(n)),\mathfrak{T})$-module if  $V$ is  a $\mathbf{u}\left(\overline{\frak{le}}(n)\right)$-module and each $V_{\bar{i}}$ ($i=0, 1$) is a   $\mathfrak{T}$-module and satisfies:

(1) the actions of $\mathfrak{h}$ coming from $\overline{\frak{le}}(n)$ and from $\mathfrak{T}$ coincide,

(2) $\overline{t}(a\cdot v)=\mathrm{Ad}(\overline{t})(a)\overline{t}(v),$ for $\overline{t}\in \mathfrak{T}, a\in \mathbf{u}(\overline{\frak{le}}(n)), v\in V.$

\begin{example}\label{lye3}
$I_{\overline{\frak{le}}(n)}(\lambda)$ and $L_{\overline{\frak{le}}(n)}^{\mathfrak{b}_{i}}(\lambda)$ ($0 \leq i\leq 2n$) are   $\left(\mathbf{u}(\overline{\frak{le}}(n)),\mathfrak{T}\right)$-modules, where $\lambda\in \Lambda_{\overline{\mathfrak{le}}(n)}$ (See
\MyRoman{1} in Appendix).
\end{example}

\subsection{Main results}

For any $\left(\mathbf{u}(\overline{\frak{le}}(n)),\mathfrak{T}\right)$-module $V,$ we define the length of $V$ as the number $|\mathcal{W}_{\mathfrak{J}}(V)|$ minus 1, denoted by $l(V)$. Write $\mathfrak{g}$ for $\frak{le}(n)$ or $\overline{\frak{le}}(n).$ Put
\begin{eqnarray*}
\Omega_{\mathfrak{g}}=\left\{\sum_{j=1}^{i-1}\varepsilon_{j}+a\varepsilon_{i}+\delta_{\mathfrak{g},\overline{\mathfrak{le}}(n)}b\delta,
\sum_{j=1}^{n}\varepsilon_{j}+\sum_{ l=i}^{n}\varepsilon_{l}+\delta_{\mathfrak{g},\overline{\mathfrak{le}}(n)}b\delta\mid a, b\in \mathbb{F}_{p}, i \leq n\right\}.
\end{eqnarray*}
A weight $\lambda\in \Lambda_{\mathfrak{g}}$ is said to be atypical if $\lambda\in \Omega_{\mathfrak{g}};$ Otherwise,  $\lambda$ is said to be typical.

 For $\lambda\in \Lambda_{\overline{\mathfrak{le}}(n)} $ and $i$ ($1\leq i\leq 2n$),  there is uniquely a weight in $\Lambda_{\overline{\mathfrak{le}}(n)}$, which is denoted by $\lambda_{i}$,    such that $ L_{\overline{\frak{le}}(n)}^{\mathfrak{b}_{i}}(\lambda_{i})=L_{\overline{\frak{le}}(n)}^{\mathfrak{b}_{0}}(\lambda).$

\begin{theorem}\label{tt1}
 For $\lambda\in \Lambda_{\overline{\mathfrak{le}}(n)}$, the restricted Kac module $I_{\overline{\frak{le}}(n)}(\lambda)$ is irreducible if and only if  $\lambda$ is typical.
\end{theorem}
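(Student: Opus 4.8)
The plan is to detect irreducibility through the $\mathfrak{J}$-grading length $l(-)$ and to compute $l\big(L_{\overline{\frak{le}}(n)}^{\mathfrak{b}_{0}}(\lambda)\big)$ by running the weight $\lambda$ through the chain of root reflections. First I would record that $l\big(I_{\overline{\frak{le}}(n)}(\lambda)\big)=np$ for every $\lambda$. As a $\mathfrak{J}$-graded space $I_{\overline{\frak{le}}(n)}(\lambda)\cong \mathbf{u}\big(\overline{\frak{le}}(n)_{[-1]}\big)\otimes L_{\overline{\frak{le}}(n)_{[0]}}^{\mathfrak{n}^{+}_{[0]}\oplus\bar{\mathfrak{h}}}(\lambda)$, with the second factor in a single $\mathfrak{J}$-degree (the Lie algebra $\mathbb{F}\sum_{j}x_{j}\partial_{j}$ of $\mathfrak{J}$ is central in $\overline{\frak{le}}(n)_{[0]}$), and $\overline{\frak{le}}(n)_{[-1]}=\sum_{i=1}^{2n}\mathbb{F}\mathrm{De}_{x_{i}}$ has $n$ odd generators $\mathrm{De}_{x_{i}}$ ($i\le n$) and $n$ even generators $\mathrm{De}_{x_{i}}$ ($i>n$), each of $\mathfrak{J}$-degree $-1$. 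Since the odd ones contribute exponents $0,1$ and the even ones exponents $0,\dots,p-1$, the occupied degrees fill an interval of length $n\cdot 1+n(p-1)=np$.

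Next comes the key reduction: $I_{\overline{\frak{le}}(n)}(\lambda)$ is irreducible iff $l\big(L_{\overline{\frak{le}}(n)}^{\mathfrak{b}_{0}}(\lambda)\big)=np$. Write $R$ for the unique maximal submodule; it is $\mathfrak{J}$-graded and, missing the top degree, lies in strictly lower degrees. The bottom piece $I_{\overline{\frak{le}}(n)}(\lambda)_{-np}$ is $\overline{\frak{le}}(n)_{[0]}$-irreducible, being $L_{\overline{\frak{le}}(n)_{[0]}}^{\mathfrak{n}^{+}_{[0]}\oplus\bar{\mathfrak{h}}}(\lambda)$ up to a one-dimensional twist. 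Because $\overline{\frak{le}}(n)_{[-1]}$ is abelian (there is no $\overline{\frak{le}}(n)_{[-2]}$) and acts by left multiplication on the local Frobenius superalgebra $\mathbf{u}\big(\overline{\frak{le}}(n)_{[-1]}\big)$, the joint kernel of $\overline{\frak{le}}(n)_{[-1]}$ on $I_{\overline{\frak{le}}(n)}(\lambda)$ is exactly the one-dimensional socle times $L_{\overline{\frak{le}}(n)_{[0]}}^{\mathfrak{n}^{+}_{[0]}\oplus\bar{\mathfrak{h}}}(\lambda)$, i.e. $I_{\overline{\frak{le}}(n)}(\lambda)_{-np}$. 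The lowest nonzero degree of any nonzero submodule is killed by $\overline{\frak{le}}(n)_{[-1]}$, hence meets this joint kernel; so $R\neq 0$ forces $R_{-np}=I_{\overline{\frak{le}}(n)}(\lambda)_{-np}$ and $l\big(L_{\overline{\frak{le}}(n)}^{\mathfrak{b}_{0}}(\lambda)\big)<np$, while $R=0$ gives equality.

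Then I would compute $l\big(L_{\overline{\frak{le}}(n)}^{\mathfrak{b}_{0}}(\lambda)\big)$ from the reflections, using $L_{\overline{\frak{le}}(n)}^{\mathfrak{b}_{0}}(\lambda)=L_{\overline{\frak{le}}(n)}^{\mathfrak{b}_{2n}}(\lambda_{2n})$ and the fact that $\mathfrak{J}$-degree is read off by $\delta=(\sum_{j}x_{j}\partial_{j})^{*}$. The $\mathfrak{b}_{0}$-highest weight vector sits at the top $\mathfrak{J}$-degree and the $\mathfrak{b}_{2n}$-highest weight vector, annihilated by $\mathfrak{n}^{+}_{2n}\supseteq\overline{\frak{le}}(n)_{[-1]}$, at the bottom. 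Each step of Proposition \ref{pl1} produces the next highest weight vector by applying a lowering operator of definite $\mathfrak{J}$-degree: $\mathrm{De}_{x_{i}}$ of degree $-1$ in cases (1)--(2), and $\mathrm{De}_{x_{(n-i+1)'}}^{p-1}$ or $\mathrm{De}_{x_{(n-i+1)'}}^{p-2}$ of degree $-(p-1)$, $-(p-2)$ in cases (3)--(6), while cases (1) and (4) drop $0$. Thus $l\big(L_{\overline{\frak{le}}(n)}^{\mathfrak{b}_{0}}(\lambda)\big)$ equals the total $\mathfrak{J}$-degree drop: a sum of $0$ or $1$ over the first $n$ reflections and of $0$, $p-2$ or $p-1$ over the last $n$. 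Its maximum is exactly $n\cdot 1+n(p-1)=np$, attained precisely when every reflection realizes case (2) and case (3)/(5).

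Finally, the combinatorial heart is to show all drops are maximal iff $\lambda$ is typical. I would track the $\varepsilon_{j}$- and $\delta$-coefficients of $\lambda=\lambda_{0},\lambda_{1},\dots$ in $\mathbb{F}_{p}$ and isolate the first non-maximal step. If it is among the first $n$ reflections, say step $m\le n$ (case (1)), then $\lambda_{m-1}=\lambda-\sum_{k<m}(\varepsilon_{k}+\delta)$ must lie in $\mathrm{Span}\{\varepsilon_{m},\delta\}$, forcing $\lambda=\sum_{j<m}\varepsilon_{j}+a\varepsilon_{m}+b\delta$, the first family of $\Omega_{\overline{\frak{le}}(n)}$. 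If it is among the last $n$ reflections (case (4) or (6)), then the good earlier steps subtract each $\varepsilon_{k}$ once and then add $(p-1)(\varepsilon_{l}-\delta)$, which pins the coefficients of $\lambda$ to the pattern $1,\dots,1,2,\dots,2$ and places $\lambda$ in the second family of $\Omega_{\overline{\frak{le}}(n)}$. Running the same bookkeeping forward gives the converse, that each member of $\Omega_{\overline{\frak{le}}(n)}$ triggers a case (1), (4) or (6). I expect this weight-tracking across the $2n$ reflections in $\mathbb{F}_{p}$, and the exact matching with the two families defining $\Omega_{\overline{\frak{le}}(n)}$, to be the main obstacle; the structural Steps 1--3 are comparatively routine.
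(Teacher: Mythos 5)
Your proposal follows essentially the same route as the paper: establish $l\big(I_{\overline{\frak{le}}(n)}(\lambda)\big)=pn$, reduce irreducibility to $l\big(L_{\overline{\frak{le}}(n)}^{\mathfrak{b}_{0}}(\lambda)\big)=pn$, and then compute that length as the total $\mathfrak{J}$-degree drop accumulated over the $2n$ reflections of Proposition \ref{pl1}, matching the non-maximal cases (1), (4), (6) exactly with the two families defining $\Omega_{\overline{\frak{le}}(n)}$. The only difference is that you supply a more explicit justification (via the socle of $\mathbf{u}\big(\overline{\frak{le}}(n)_{[-1]}\big)$) for the reduction step, which the paper simply asserts; the combinatorial weight-tracking you defer is precisely the content of the paper's facts (a)--(e).
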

\begin{proof}
Let $\upsilon_{0}$ and $\upsilon_{2n}$  be the highest weight vectors of  $L_{\overline{\frak{le}}(n)}^{\mathfrak{b}_{0}}(\lambda)$ with respect to $\mathfrak{b}_{0}$ and $\mathfrak{b}_{2n}$, respectively (see Subsection 2.3), and suppose
$$\overline{t}(\upsilon_{0})=t^{l_{0}}\upsilon_{0},\;\overline{t}(\upsilon_{2n})=t^{l_{2n}}\upsilon_{2n} \;\mbox{for}\; \overline{t}=\mathrm{diag}(t,\ldots, t)\in \mathfrak{J}.$$
We have $l\left(L_{\overline{\frak{le}}(n)}^{\mathfrak{b}_{0}}(\lambda)\right)=l_{2n}-l_{0}.$
Then $ l\left(L_{\overline{\frak{le}}(n)}^{\mathfrak{b}_{0}}(\lambda)\right)\leq l\left(I_{\overline{\frak{le}}(n)}(\lambda)\right)=pn$ and $I_{\overline{\frak{le}}(n)}(\lambda)$ is irreducible if and only if $l\left(L_{\overline{\frak{le}}(n)}^{\mathfrak{b}_{0}}(\lambda)\right)= pn.$ Then it suffices  to show that  $l\left(L_{\overline{\frak{le}}(n)}^{\mathfrak{b}_{0}}(\lambda)\right)=pn$ if and only if  $\lambda$ is typical. By Proposition \ref{pl1}, we have the following facts:

 (a) For $2\leq k\leq n,$   $\lambda_{k-1}(\mathfrak{h}_{k})=0$ if and only if
$$ \lambda=b\delta \;\mbox{or} \;\sum_{j=1}^{k-1}\varepsilon_{j}+a\varepsilon_{k}+b\delta, \;  0\not=a, b\in \mathbb{F}_{p}.$$

 (b) For $k= n,$    $\lambda_{k}(\mathfrak{h}_{2n-k})=0$ if and only if
$$\lambda= b\delta\; \mbox{or}\;\sum_{j=1}^{n-1}\varepsilon_{j}+a\varepsilon_{n}+b\delta, 0\not=a, b\in \mathbb{F}_{p}.$$

  (c) For $k>n,$  $\lambda_{k}(\mathfrak{h}_{2n-k})=0$ if and only if
$$\lambda= b\delta\;\mbox{or}\;\sum_{j=1}^{2n-k}\varepsilon_{j}+a\varepsilon_{2n-k}+2\sum_{ j=2n-k+1}^{n}\varepsilon_{j}+b\delta, 0\not=a, b\in \mathbb{F}_{p}.$$

  (d) If $\lambda=\sum_{j=1}^{n-1}\varepsilon_{j}+a\varepsilon_{n}+b\delta$, where $0\neq a, b\in \mathbb{F}_{p}$, then $\lambda_{n}=a\varepsilon_{n}+(b-n+1)\delta.$

  (e) For $k>n$ and  $\lambda=\sum_{j=1}^{2n-k}\varepsilon_{j}+a\varepsilon_{2n-k}+2\sum_{ j=2n-k+1}^{n}\varepsilon_{j}+b\delta, 0\not=a, b\in \mathbb{F}_{p}$,   $ \lambda_{k}=a\varepsilon_{2n-k}+(b-2n+k)\delta.$

By  the facts (a)--(e) and Proposition \ref{pl1},  we have
\[ l\left(L_{\overline{\frak{le}}(n)}^{\mathfrak{b}_{0}}(\lambda)\right)=\left\{
 \begin{array}{ll}
0, & if \; \lambda=b\delta, b\in \mathbb{F}_{p}\\
pn-1,& if \; \lambda=\sum_{j=1}^{i-1}\varepsilon_{j}+a\varepsilon_{i}+b\delta, 1\leq i\leq n-1, 0\not=a, b\in \mathbb{F}_{p}\\
pn-p,& if \; \lambda=\sum_{j=1}^{n-1}\varepsilon_{j}+a\varepsilon_{n}+b\delta, 0\not=a, b\in \mathbb{F}_{p}\\
pn-1,& if \;\lambda=\sum_{j=1}^{n}\varepsilon_{j}+\sum_{ j=i}^{n}\varepsilon_{j}+b\delta, b\in \mathbb{F}_{p}, i < n\\
pn,& \mbox{otherwise}.
\end {array}
\right.
\]
That is, $l\left(L_{\overline{\frak{le}}(n)}^{\mathfrak{b}_{0}}(\lambda)\right)=pn$ if and only if  $\lambda$ is typical.  The proof is complete.
\end{proof}
As a corollary of  Theorem \ref{tt1}, we have
\begin{theorem}\label{c1}
For $\lambda\in \Lambda_{\mathfrak{le}(n)},$  $I_{\frak{le}(n)}(\lambda)$ is irreducible if and only if  $\lambda$ is typical.
\end{theorem}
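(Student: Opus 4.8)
The plan is to deduce Theorem \ref{c1} directly from Theorem \ref{tt1}, exploiting the bridge recorded in Subsection 2.3: for $\lambda\in\Lambda_{\mathfrak{le}(n)}$, the module $I_{\frak{le}(n)}(\lambda)$ is irreducible over $\frak{le}(n)$ if and only if $I_{\overline{\frak{le}}(n)}(\lambda)$ is irreducible over $\overline{\frak{le}}(n)$. This reduces the assertion to comparing the two notions of typicality on the common weight set. First I would note that $\Lambda_{\mathfrak{le}(n)}=\mathrm{Span}_{\mathbb{F}_p}\{\varepsilon_1,\ldots,\varepsilon_n\}$ sits inside $\Lambda_{\overline{\mathfrak{le}}(n)}=\mathrm{Span}_{\mathbb{F}_p}\{\varepsilon_1,\ldots,\varepsilon_n,\delta\}$ as precisely the weights whose $\delta$-component vanishes, the Kronecker factor $\delta_{\mathfrak{g},\overline{\mathfrak{le}}(n)}$ in the definition of $\Lambda_{\mathfrak{g}}$ being what kills the $\delta$-direction when $\mathfrak{g}=\frak{le}(n)$.

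Next I would check that typicality is consistent under this inclusion, which is the only substantive step. Since the same Kronecker factor $\delta_{\mathfrak{g},\overline{\mathfrak{le}}(n)}$ appears in the definition of $\Omega_{\mathfrak{g}}$, the set $\Omega_{\mathfrak{le}(n)}$ is obtained from $\Omega_{\overline{\mathfrak{le}}(n)}$ by deleting the $b\delta$ summand, that is, by imposing $b=0$ in each listed weight. I would then establish the equivalence $\lambda\in\Omega_{\mathfrak{le}(n)}\iff\lambda\in\Omega_{\overline{\mathfrak{le}}(n)}$ for $\lambda\in\Lambda_{\mathfrak{le}(n)}$: the inclusion $\Omega_{\mathfrak{le}(n)}\subseteq\Omega_{\overline{\mathfrak{le}}(n)}$ is immediate, since these are exactly the $b=0$ members; for the reverse implication, a weight of the listed shape with vanishing $\delta$-component forces $b=0$ --- here one uses that $\{\varepsilon_1,\ldots,\varepsilon_n,\delta\}$ is a basis of $\bar{\mathfrak{h}}^{*}$, so that the $\varepsilon$- and $\delta$-coefficients are uniquely determined --- and hence such a weight already lies in $\Omega_{\mathfrak{le}(n)}$.

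Assembling the pieces, for $\lambda\in\Lambda_{\mathfrak{le}(n)}$ the chain of equivalences reads: $I_{\frak{le}(n)}(\lambda)$ irreducible $\iff$ $I_{\overline{\frak{le}}(n)}(\lambda)$ irreducible (Subsection 2.3) $\iff$ $\lambda$ typical as a weight of $\overline{\frak{le}}(n)$ (Theorem \ref{tt1}) $\iff$ $\lambda\notin\Omega_{\overline{\mathfrak{le}}(n)}$ $\iff$ $\lambda\notin\Omega_{\mathfrak{le}(n)}$ $\iff$ $\lambda$ typical as a weight of $\frak{le}(n)$, which is the claim. I expect no genuine obstacle here beyond the linear-algebra bookkeeping that matches the two $\Omega$-sets; all the representation-theoretic content is already carried by Theorem \ref{tt1}, and the passage from $\overline{\frak{le}}(n)$ to $\frak{le}(n)$ only requires confirming that the extra grading element $\sum_{i=1}^{2n}x_i\partial_i$ contributes solely the decoupled $\delta$-coordinate, so that switching it off leaves the typicality condition on the $\varepsilon_i$ untouched.
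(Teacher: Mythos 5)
Your proposal is correct and follows exactly the route the paper intends: the paper states Theorem \ref{c1} as an immediate corollary of Theorem \ref{tt1} via the equivalence from Subsection 2.3, and your argument simply supplies the (routine) verification that the Kronecker factor $\delta_{\mathfrak{g},\overline{\mathfrak{le}}(n)}$ makes the two $\Omega$-sets, and hence the two notions of typicality, agree on $\Lambda_{\mathfrak{le}(n)}$. No discrepancy with the paper's approach.
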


\addcontentsline{toc}{chapter}{Appendix}
\appendix
\renewcommand\thesection{Appendix}
\section{}
\renewcommand\thesection{\arabic{section}}
\renewcommand{\thelemma}{A.\arabic{lemma}}
\renewcommand{\theequation}{A.\arabic{equation}}

\MyRoman{1}. Let
\[\phi(x_{i})=\left\{
 \begin{array}{ll}
\sum_{j=1}^{n}a_{ji}x_{j},&  \mbox{if} \; i\leq n \\
\sum_{j=n+1}^{2n}a_{ji}x_{j},&   \mbox{if}\; i>n.
\end {array}
\right.
\]
If $\phi=\mathrm{diag}(t,\ldots, t)\in\mathfrak{J},$
then
\begin{eqnarray}\label{e1}
f_{\phi}\left(\sum_{i=1}^{2n}x_{i}\partial_{i}\right)&=&\sum_{i=1}^{2n}\phi(x_{i})f_{\phi}(\partial_{i})=\sum_{i=1}^{2n}tx_{i}t^{-1}\partial_{i}=
\sum_{i=1}^{2n}x_{i}\partial_{i}
\end{eqnarray}
and
\begin{eqnarray}\label{e2}
f_{\phi}(\mathrm{De}_{f})=t^{\mathrm{deg}f-2}\mathrm{De}_{f} \;\mbox{for}\; f\in \mathcal{O}(n).
\end{eqnarray}
If $\phi\in \mathrm{SP}(n, \mathbb{F}),$ then for   $1 \leq i, j\leq n,$  we have
$\sum_{k=1}^{n}a_{ik}a_{j'k'}= \delta_{ij}.$
Then
\begin{eqnarray}\label{e3}\nonumber
f_{\phi}\left(\sum_{i=1}^{2n}x_{i}\partial_{i}\right)&=&\sum_{i=1}^{2n}\phi(x_{i})f_{\phi}(\partial_{i})\\\nonumber
&=&\sum_{i=1}^{n}\sum_{j=1}^{n}a_{ji}x_{j}\sum_{k=1}^{n}a_{k'i'}\partial_{k}+\sum_{i=n+1}^{2n}\sum_{j=n+1}^{2n}a_{ji}x_{j}\sum_{k=n+1}^{2n}
a_{k'i'}\partial_{k}\\\nonumber
&=&\sum_{j,k=1}^{n}\left(\sum_{i=1}^{n}a_{ji}a_{k'i'}\right)x_{j}\partial_{k}+\sum_{j,k=n+1}^{2n}\left(\sum_{i=n+1}^{2n}a_{ji}a_{k'i'}\right)x_{j}\partial_{k}\\
&=&\sum_{j,k=1}^{2n}\delta_{jk}x_{j}\partial_{k}=\sum_{i=1}^{2n}x_{i}\partial_{i}
\end{eqnarray}
and
\begin{eqnarray}\label{e4}
f_{\phi}(\mathrm{De}_{f})=\mathrm{De}_{\phi(f)} \;\mbox{for}\; f\in \mathcal{O}(n).
\end{eqnarray}
Eqs. (\ref{e2}) and (\ref{e4}) imply that
$$
f_{\phi}([\mathrm{De}_{f}, \mathrm{De}_{g}])=[f_{\phi}(\mathrm{De}_{f}),f_{\phi}(\mathrm{De}_{g})] \;\mbox{for}\; f, g\in \mathcal{O}(n).
$$
Using Eqs.  (\ref{e1}--\ref{e4}) and the fact that $\phi$ is a $\mathbb{Z}$-homogeneous automorphism of $\mathcal{O}(n)$, we have
$$
f_{\phi}\left(\left[\sum_{i=1}^{n}x_{i}\partial_{i}, \mathrm{De}_{f}\right]\right)=\left[f_{\phi}\left(\sum_{i=1}^{n}x_{i}\partial_{i}\right),f_{\phi}(\mathrm{De}_{f})\right] \;\mbox{for}\; f\in \mathcal{O}(n).
$$
Therefore, $f_{\phi}\in \mathrm{Aut}\left(\overline{\frak{le}}(n)\right).$\\

\noindent\MyRoman{2}.  As in \cite{SY}, we have
$$X(\mathfrak{T})/ p X(\mathfrak{T})\cong \Lambda_{\overline{\frak{le}}(n)}.$$
Then for $a\in \mathbf{u}(\overline{\frak{le}}(n))$ and a highest weight vector $\upsilon_{\lambda}$ of  $L_{\overline{\frak{le}}(n)}^{\mathfrak{b}_{0}}(\lambda)$ with respect to $\mathfrak{b}_{0}$, we can define
$$
\overline{t}(a\otimes v_{\lambda})=\mathrm{Ad}(\overline{t})(a)\otimes  \lambda(\overline{t})(v_{\lambda}).
$$
Clearly, $I_{\overline{\frak{le}}(n)}(\lambda)_{\bar{i}}$, $i=0, 1$, is a   $\mathfrak{T}$-module and
$$\overline{t}(a\cdot v)=\mathrm{Ad}(\overline{t})(a)\overline{t}(v) \;\mbox{for}\;\overline{t}\in \mathfrak{T}, a\in \mathbf{u}(\overline{\frak{le}}(n)), v\in I_{\overline{\frak{le}}(n)}(\lambda).$$
We claim the action of $\mathfrak{T}$ on $\overline{\frak{le}}(n)$ coincides with that of $\bar{\mathfrak{h}}.$ For $$\overline{t}=\mathrm{diag}(tt_{1},\ldots,tt_{n},tt_{1}^{-1},\ldots,tt_{n}^{-1})\in \mathfrak{T},\;x^{(\underline{r})}\in \mathcal{O}(n)\;\mbox{and}\;h\in \bar{\mathfrak{h}},$$ we can check the following equations:
\begin{eqnarray*}
\overline{t}(\mathrm{De}_{x^{(\underline{r})}})&=&t^{\mathrm{deg}x^{(\underline{r})}-2}t_{1}^{r_{1}}\cdots t_{n}^{r_{n}}t_{1}^{-r_{n+1}}\cdots t_{n}^{-r_{2n}}\mathrm{De}_{x^{(\underline{r})}}\\
&=&\left(\sum_{i=1}^{n}(r_{i'}-r_{i})\Lambda_{i}+(\mathrm{deg}x^{(\underline{r})}-2)\Lambda_{n+1}\right)(\overline{t})\mathrm{De}_{x^{(\underline{r})}},
\end{eqnarray*}
\begin{eqnarray*}
[h,\mathrm{De}_{x^{(\underline{r})}}]=\left(\sum_{i=1}^{n}(r_{i'}-r_{i})\varepsilon_{i}+(\mathrm{deg}x^{(\underline{r})}-2)\delta\right)(h)\mathrm{De}_{x^{(\underline{r})}},
\end{eqnarray*}
\begin{eqnarray*}
\overline{t}\left(\sum_{i=1}^{2n}x_{i}\partial_{i}\right)=\sum_{i=1}^{2n}x_{i}\partial_{i}=0(\overline{t})\sum_{i=1}^{2n}x_{i}\partial_{i},
\end{eqnarray*}
\begin{eqnarray*}
\left[h,\sum_{i=1}^{2n}x_{i}\partial_{i}\right]=0=0(h)\sum_{i=1}^{2n}x_{i}\partial_{i}.
\end{eqnarray*}
Summarizing, the action of $\mathfrak{T}$ on $\overline{\frak{le}}(n)$ coincides with that on $\bar{\mathfrak{h}}.$ Then $I_{\overline{\frak{le}}(n)}(\lambda)$ is a $\left(\mathbf{u}(\overline{\frak{le}}(n)),\mathfrak{T}\right)$-module. Similarly, $L_{\overline{\frak{le}}(n)}^{\mathfrak{b}_{i}}(\lambda)$ is also  $\left(\mathbf{u}(\overline{\frak{le}}(n)),\mathfrak{T}\right)$-module, $ 0 \leq i\leq 2n$.
\\

\noindent \textbf{Acknowledgements.} The authors  are grateful to Professor Chaowen Zhang for several conversations  and suggestions on this topic.

\end{document}